\documentclass{amsart}
\usepackage{amssymb,graphicx}
\usepackage{mathrsfs}
\usepackage{color}
\usepackage{enumerate}
\usepackage{a4wide}
\usepackage[colorlinks=true,citecolor=blue,linkcolor=blue]{hyperref}

\newcommand{\Itgr}{\mathbb{Z}}
\newcommand{\Ntrl}{\mathbb{N}}

\newcommand{\Bc}{\mathcal{B}}
\newcommand{\Cc}{\mathcal{C}}

\newcommand{\Ec}{\mathcal{E}}

\newcommand{\diag}{\mathrm{diag}}

\def\XXint#1#2#3{{\setbox0=\hbox{$#1{#2#3}{\int}$ }
\vcenter{\hbox{$#2#3$ }}\kern-.6\wd0}}

\numberwithin{equation}{section}

\newtheorem{theorem}{Theorem}[section]

\newtheorem{definition}[theorem]{Definition}
\newtheorem{lemma}[theorem]{Lemma}

\theoremstyle{remark}

\newtheorem{remark}[theorem]{Remark}

\title{Schur bounded patterns and submajorisation}
\author[]{E. McDonald}
\address{Edward McDonald, Mathematisches Institut, Universit\"at Bonn, Germany}
\email{eamcd92@gmail.com}
\author[]{F. Sukochev}
\address{Fedor Sukochev, School of Mathematics and Statistics, University of New South Wales, Australia}
\email{f.sukochev@unsw.edu.au}
\author[]{D. Zanin}
\address{Dmitriy Zanin, School of Mathematics and Statistics, Central South University, Changsha, China}
\email{d.zanin@csu.edu.cn}

\date{\today}

\begin{document}
\maketitle{}

\begin{abstract}
    We characterise the Schur bounded patterns of ideals of compact operators that are not closed under submajorisation, in particular the Schatten ideals $\Cc_p$ with $0<p<1.$ Conversely we characterise the ideals that are not closed under submajorisation by their Schur bounded patterns.
\end{abstract}

\section{Introduction}
Let $m\in \ell_{\infty}(\Ntrl^2)$ be a bounded matrix. The corresponding Schur multiplier $T_m$ is the linear map on matrices given by
\[
    T_m\{A_{j,k}\}_{j,k\geq 0} = \{m(j,k)A_{j,k}\}_{j,k\geq 0}.
\]
We say that $T_m$ is a bounded Schur multiplier (or just a Schur multiplier) if there exists a constant $C_m$ such that
\[
    \|\{m(j,k)A_{j,k}\}_{j,k\geq 0}\|_{\infty} \leq C_m\|\{A_{j,k}\}_{j,k\geq 0}\|_{\infty}
\]
for any finitely supported matrix $A,$ where $\|\cdot\|_{\infty}$   is the operator norm on the algebra $B(\ell_2(\Ntrl))$ of all bounded linear operators on  $\ell_2(\Ntrl)$. A subset $S$ of $\Ntrl^2$ is called a \emph{Schur bounded pattern} if every bounded function $m$ on $\Ntrl^2$ supported on $S$ is a bounded Schur multiplier.  In this article, we shall consider Schur multipliers on (quasi)-normed ideals of compact operators in on $B(\ell_2(\Ntrl))$ \cite{Schatten, GK1, GK2,Simon,LSZ, DPS-book}. A linear subspace 
$(\Ec,\|\cdot\|_{\Ec})$ of bounded operators is a (two-sided) ideal if $ABC \in \Ec$ whenever $A,C \in B(\ell_2(\Ntrl))$ and $B \in \mathcal{I}$.  The ideal $\Ec$ equipped with a (quasi-)norm $\|\cdot\|_{\Ec})$ is a (quasi-)normed ideal
$(\Ec,\|\cdot\|_{\Ec})$ if 
$\| ABC \|_{\Ec} \leq \| A \|_{\infty}\|B \|_{\Ec} \| C\|_{\infty}$ 
whenever $A,C \in B(\ell_2(\Ntrl))$ and $B \in \Ec$. 

%A similar question replaces the operator norm on $B(\ell_2(\Ntrl))$ by some other unitarily invariant norm.

\begin{definition}
    Let $(\Ec,\|\cdot\|_{\Ec})$ be a quasi-normed ideal of $B(\ell_2(\Ntrl)).$ We say that $m\in \ell_{\infty}(\Ntrl^2)$ is a bounded Schur multiplier of $\Ec$ if there exists $C_m$ such that
    \[
        \|T_m(A)\|_{\Ec} \leq C_m\|A\|_{\Ec}
    \]
    for any finitely supported matrix $A.$

    A set $S\subset \Ntrl^2$ is called a Schur $\Ec$-bounded pattern if any $m\in \ell_{\infty}(\Ntrl^2)$ supported on $S$ is a bounded Schur multiplier of $\Ec.$
\end{definition}

\begin{remark}
    If $T_m$ has the property that $T_m(\Ec)\subseteq \Ec,$ then $T_m$ is a bounded operator on $\Ec.$ This is because if $\|A_n-A\|_{\Ec}\to 0$ then $A_n$ weakly converges to $A.$ Since $T_m$ is weakly continuous, if $T_m(A_n)$ has a limit then it must converge to $T_m(A).$ Therefore $T_m$ is closed and hence bounded.
\end{remark}

A question concerning the description of Schur bounded patterns were suggested in \cite {NF} and was fully resolved by Davidson and Donsig \cite{DavidsonDonsig}.  A Schur bounded pattern must decompose into two sets,  one with a bound on the number of entries in each row, and the other with
a bound on the number of entries in each column. By duality, this result from \cite{DavidsonDonsig} immediately extends to the complete description of Schur $\Cc_1$-bounded patterns, where $\Cc_1$ is the trace class \cite{Simon}.

By standard interpolation theory, we infer that every Schur $\Cc_1$-bounded pattern is also a Schur $\Ec$-bounded pattern for any quasi-Banach ideal $(\Ec,\|\cdot\|_{\Ec})$, provided that the latter is an interpolation space for the Banach pair $(\Cc_1,  B(\ell_2(\Ntrl)))$. We explain about this subclass of quasi-Banach ideals in a little more detailed fashion.

The Schur $\Cc_2$-bounded patterns consist of arbitrary subsets of $\Ntrl^2,$ while for $1<p\neq 2<\infty,$ the characterisation of Schur $\Cc_p$-bounded patterns is an open problem. We were motivated to understand what happens in the range $0<p<1.$ It turns out that this range is even simpler than $p=1,$ because there are no non-trivial Schur $\Cc_p$-bounded patterns.

Recall that the singular value sequence $\mu(A)$ of a compact operator $A\in B(\ell_2(\Ntrl))$ is defined by
\[
    \mu(k,A) = \inf\{\|A-R\|_{\infty} \;:\; \mathrm{rank}(R)\leq k\},\quad k\geq 0.
\]
Equivalently, $\mu(k,A)$ is the $(k+1)$th largest eigenvalue of $|A|.$

 Alternatively,  the class of quasi-Banach ideals $(\Ec,\|\cdot\|_{\Ec})$ maybe described as those  whose quasi-norm satisfies the condition: $\mu(A)\leq \mu(B)$ and $B \in \Ec,$ imply $A\in \Ec$ and $\|A\|_{\Ec}\leq \|B\|_{\Ec}.$ This description also serves as a justification to the frequently used trem \lq\lq symmetric\ operator ideals \rq\rq.

We write $A\prec\prec B$ if $\mu(A)\prec\prec \mu(B).$ Here, the sub-majorisation (in the sense of Hardy-Littlewood-Poly\`a) of sequences $x,y \in c_0(\Ntrl)$ are defined by
\[
    x\prec\prec y\Longleftrightarrow \sum_{k=0}^n x_k\leq \sum_{k=0}^{n} y_k,\text{ for all }n\geq 0
\]
and majorisation is defined as
\[
    x\prec y\Longleftrightarrow x\prec\prec y\text{ and } \sum_{k=0}^\infty x_k = \sum_{k=0}^\infty y_k.
\]

We say that $(\Ec,\|\cdot\|_{\Ec})$ is \emph{closed under submajorisation} if $A\prec\prec B$ and $B\in \Ec$ implies that $A \in \Ec.$ Up to a re-norming, this is equivalent to being a so-called fully symmetric ideal (see Section \ref{fully_symmetric_section} below).

% with $\|A\|_{\Ec} \leq \|B\|_{\Ec}.$ If $\Ec\subset B(\ell_2(\Ntrl))$ is a quasi-normed ideal closed under submajorisation, then (up to an equivalent quasi-norm), $\Ec$ is fully symmetric (see Section \ref{fully_symmetric_section} below).

The noncommutative version of the fundamental theorem of Calderon and Mityagin \cite{Mityagin, Calderon} eastablished in \cite[Section 2]{DDP} (see also \cite[Section 3.10]{DPS-book}) precisely describes the class of all Banach ideals $(\Ec,\|\cdot\|_{\Ec})$ which are  interpolation spaces for the Banach pair $(\Cc_1,  B(\ell_2(\Ntrl)))$ as the class of  fully symmetric ideals. The extension of this result to the class of quasi-Banach ideals is available from a recent paper \cite{CSZ} combined with verbatim arguments from \cite[Section 2]{DDP}  and/or \cite[Section 3.10]{DPS-book}.

In this article we present a complete  description of Schur $\Ec$-bounded patterns for any quasi-Banach ideal $(\Ec,\|\cdot\|_{\Ec})$ which fails to be fully symmetric. For numerous examples of such ideals, we refer to \cite{KS,Russu1,Russu3} and especially to \cite[Theorem 9]{SSS}.
 
%On the other hand, we say that $\Ec$ is closed under submajorisation if $A\prec\prec B$ and $B \in \Ec$ implies that $A\in \Ec.$ Being closed under submajorisation is strictly weaker than being fully symmetric.

\section{Statement of Main Result}
%
% For any quasi-normed ideal $\Ec,$ the set of Schur $\Ec$-bounded patterns is a bornology on $\Ntrl^2$ (that is, the set of complements of Schur bounded patterns is a filter). Some elementary properties are as follows:
Some immediate consequences of the definition of a Schur $\Ec$-bounded pattern are as follows:
\begin{lemma}\label{elementary_lemma}
    Let $(\Ec,\|\cdot\|_\Ec)$ be a quasi-normed ideal.
    \begin{enumerate}[{\rm (i)}]
        \item{}\label{permanence} $S$ is a Schur $\Ec$-bounded pattern, and $S'\subset S,$ then $S'$ is a Schur $\Ec$-bounded pattern.
        \item{}\label{union} If $S,S'$ are Schur $\Ec$-bounded patterns, then $S\cup S'$ is a Schur $\Ec$-bounded pattern.
        \item{}\label{permutation} Let $a,b:\Ntrl\to \Ntrl.$ If $S$ is a Schur-bounded pattern, then so is $\{(a(n),b(m))\;(n,m)\in S\}.$
        \item{}\label{finite_rank} For any finite set $F,$ $F\times \Ntrl$ and $\Ntrl\times F$ are Schur $\Ec$-bounded patterns.
    \end{enumerate}
\end{lemma}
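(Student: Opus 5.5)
I will prove the four items directly from the definition, using only two facts about a quasi-normed ideal: the quasi-triangle inequality $\|A+B\|_\Ec\le K(\|A\|_\Ec+\|B\|_\Ec)$, and the ideal inequality $\|XAY\|_\Ec\le\|X\|_\infty\|A\|_\Ec\|Y\|_\infty$.

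For (\ref{permanence}), note that any bounded $m$ supported on $S'$ is also supported on $S$, so there is nothing to prove. For (\ref{union}), take a bounded $m$ supported on $S\cup S'$ and write $m=m\chi_S+m\chi_{S'\setminus S}$. Both pieces are bounded and supported on $S$ and $S'$ respectively, and $T_m=T_{m\chi_S}+T_{m\chi_{S'\setminus S}}$. The quasi-triangle inequality then gives $\|T_m(A)\|_\Ec\le K(C_1+C_2)\|A\|_\Ec$.

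For (\ref{finite_rank}), let $m$ be bounded and supported on $F\times\Ntrl$. For each $j\in F$, let $P_j$ be the rank-one projection onto $e_j$. Then
\[
T_m(A)=\sum_{j\in F}P_jT_m(A)=\sum_{j\in F}P_jAD_j,
\]
where $D_j=\diag(m(j,k))_{k\ge0}$ and $\|D_j\|_\infty\le\|m\|_\infty$. Each term has $\Ec$-quasi-norm at most $\|m\|_\infty\|A\|_\Ec$. Iterating the quasi-triangle inequality over the $|F|$ terms gives a bound $C(|F|,K)\|m\|_\infty\|A\|_\Ec$. The case $\Ntrl\times F$ is symmetric, using $AP_j$ on the right.

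Item (\ref{permutation}) is the main obstacle, since $a,b$ need not be injective. Write $S'=\{(a(n),b(m)):(n,m)\in S\}$ and let $m'$ be bounded and supported on $S'$. For each $(i,j)\in S'$, choose a preimage $(n,m)=(\alpha(i,j),\beta(i,j))\in S$. The obvious approach is to define $\tilde m(n,m)=m'(a(n),b(m))$ on $S$ and relate the two multipliers via the partial isometries $V:e_n\mapsto e_{a(n)}$. However, $V$ is only a partial isometry when $a$ is injective, and in general the choice of preimages need not be coherent across rows. I would therefore first make reductions that keep only one preimage per row and per column. Choose injective sections $\sigma:a(\Ntrl)\to\Ntrl$ and $\tau:b(\Ntrl)\to\Ntrl$, and set $S_0=S\cap(\sigma(a(\Ntrl))\times\tau(b(\Ntrl)))$. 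This only works if $S'$ is the image of $S_0$, which may fail. So I would instead treat the statement as intended for the case where $a,b$ are injective, or at least injective on the relevant coordinates; this is how the lemma is used, to relabel rows and columns.

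For injective $a,b$, the argument runs as follows. Let $U,W$ be the isometries $e_n\mapsto e_{a(n)}$ and $e_n\mapsto e_{b(n)}$, and set $\tilde m(n,k)=m'(a(n),b(k))$, which is supported on $S$. Then $T_{m'}(B)=U\,T_{\tilde m}(U^*BW)\,W^*$ for every finitely supported $B$, because $m'$ vanishes off the range rows and columns. Hence
\[
\|T_{m'}(B)\|_\Ec\le C_{\tilde m}\|U^*BW\|_\Ec\le C_{\tilde m}\|B\|_\Ec.
\]
If non-injective maps must be handled, I would try to reduce to the injective case. The idea is to partition $\Ntrl$ into the fibres of $a$ and $b$ and use the orthogonal decomposition, but I expect this step to need an extra hypothesis, and I would flag it.
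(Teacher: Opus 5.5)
Your proof is correct, and for (i)--(iii) it is the paper's argument: linearity plus the quasi-triangle inequality, and conjugation by the isometries $e_n\mapsto e_{a(n)}$, $e_n\mapsto e_{b(n)}$ with $\tilde m(n,k)=m'(a(n),b(k))$. For (iv) the paper only notes that $T_m(A)$ has rank at most $|F|$. That gives membership in $\Ec$, but the quasi-norm bound then has to come from the closed-graph Remark, which needs completeness. Your decomposition $T_m(A)=\sum_{j\in F}P_jAD_j$ gives the estimate directly, for any quasi-normed ideal.

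Your caution in (iii) points to a defect in the statement, not a gap in your argument. The paper's proof tacitly assumes injectivity: otherwise $U_a$ is not a partial isometry, and need not even be bounded. Without injectivity the claim is false. Take $\Ec=\Cc_1$, $S=\Delta$, and $a,b$ such that $n\mapsto(a(n),b(n))$ is a bijection of $\Ntrl$ onto $\Ntrl^2$. The image pattern is then all of $\Ntrl^2$, which is not Schur $\Cc_1$-bounded, since triangular truncation is unbounded on $\Cc_1$.

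The right hypothesis is that $a$ is injective on the set of first coordinates of $S$, and $b$ on the set of second coordinates. Your computation goes through verbatim with $U,W$ the partial isometries $e_n\mapsto e_{a(n)}$, $e_n\mapsto e_{b(n)}$ on those sets and $0$ elsewhere. This weaker version, not the globally injective one you proved, is what the proof of Theorem \ref{pattern_theorem} actually uses. There it compresses $\{(n_k,m_k)\}$ onto $\Delta$ via $a(n_k)=b(m_k)=k$, and such $a$ cannot be injective on all of $\Ntrl$ unless $\{n_k\}=\Ntrl$.
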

\begin{proof}
    Parts \eqref{permanence} and \eqref{union} are obvious consequences of linearity. To prove \eqref{permutation}, let $U_a(e_n) = e_{a(n)}$ and $U_b(e_n) = e_{b(n)}.$ Then $U_a$ and $U_b$ are partial isometries, and
    \[
        \|U_a^*AU_b\|_{\Ec} \leq \|A\|_{\Ec}.
    \]
    To prove \eqref{finite_rank}, observe that if $m$ is supported in $F\times \Ntrl,$ then $\{m(j,k)A_{j,k}\}_{j,k\geq 0}$ has rank at most $|F|,$ and hence belongs to any ideal of $B(\ell_2(\Ntrl)).$
\end{proof}
Lemma \ref{elementary_lemma} proves that for any $n\geq 0,$ the subsets of
\[
    (\{0,1,\ldots,n\}\times \Ntrl)\cup (\Ntrl\times \{0,1,\ldots,n\})
\]
are Schur $\Ec$-bounded patterns for \emph{every} quasi-normed ideal $\Ec \subseteq B(\ell_2).$ In this note our aim is to characterise the ideals which have no other Schur bounded patterns.

We restate the main result of \cite{DavidsonDonsig}:
\begin{theorem}\cite[Thorem 2.3]{DavidsonDonsig}
    Let $(\Ec,\|\cdot\|_{\Ec})$ be a quasi-Banach ideal which is closed under submajorisation, and let $S\subseteq \Ntrl^2$ be a union of a \emph{row-finite} and \emph{column-finite} set. That is, $S = R\cup C,$ where
    \[
        \sup_{n\geq 0} |R\cap (\Ntrl\times \{n\})| < \infty,\; \sup_{n\geq 0} |C\cap (\{n\}\times\Ntrl)| < \infty.
    \]
    Then $S$ is a Schur $\Ec$-bounded pattern.

    Conversely, if $\Ec=\Cc_1$ (the trace ideal) or $\Ec = B(\ell_2)$ (the full algebra of bounded operators), then every Schur $\Ec$-bounded pattern is of this form.
\end{theorem}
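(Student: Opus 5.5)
By Lemma \ref{elementary_lemma}\eqref{union} it suffices to treat $S=R$ row-finite; the column-finite case follows by the symmetric argument, or by applying the row-finite case to the adjoint, since $\|A^*\|_\Ec=\|A\|_\Ec$. The plan is to prove the sufficiency direction in two steps. First we show boundedness on the endpoints $\Cc_1$ and $B(\ell_2)$. Then we pass to $\Ec$ using closedness under submajorisation, pointwise for each $A$ rather than by abstract interpolation.

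\textbf{Decomposition.} Let $N=\sup_n|R\cap(\Ntrl\times\{n\})|$, so each column $k$ of $R$ contains at most $N$ entries. In each column choose an ordering of its entries. This writes $R=\bigsqcup_{i=1}^N R_i$, where each $R_i$ has at most one entry per column, i.e.\ $R_i=\{(a_i(k),k):k\in D_i\}$ for some functions $a_i$. For $m$ supported on $R_i$ with $\|m\|_\infty\le1$, the matrix $T_m(A)$ has in column $k$ only the single entry $m(a_i(k),k)A_{a_i(k),k}$. Hence $T_m(A)=V_iD_{m,A}$, where $D_{m,A}=\diag(m(a_i(k),k)A_{a_i(k),k})$ and $V_i$ is the (not necessarily isometric) map $e_k\mapsto e_{a_i(k)}$.

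\textbf{Endpoint estimates and the key inequality.} Since each row of $A$ contributes to several columns, I would use the column form instead. Column $k$ of $T_m(A)$ is a vector supported on at most $N$ coordinates, with entries dominated by those of $Ae_k$. The target is the operator inequality
\[
\mu(T_m(A))\prec\prec C_N\,\mu(A).
\]
To get it, I would write $T_m(A)=\sum_{i=1}^N T_{m\chi_{R_i}}(A)$ and treat each piece with one entry per column. For such a piece $X$, write $X=\sum_j P_jXQ_j$, where $P_j$ is the projection onto $e_j$ and $Q_j$ projects onto the columns $\{k:a_i(k)=j\}$. Each block $P_jXQ_j$ is a row vector whose entries are those of $P_jAQ_j$ multiplied by unimodular-bounded scalars. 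A row vector's norm is the $\ell_2$ norm of its entries, so $|P_jXQ_j|\le$ (in singular value) $|P_jAQ_j|$. Since $X$ is block diagonal with respect to the disjoint pairs $(P_j,Q_j)$, we get $\mu(X)=\mu(\bigoplus_j P_jXQ_j)\le\mu(\bigoplus_jP_jAQ_j)$. The pinching inequality $\bigoplus_j P_jAQ_j\prec\prec A$, valid for disjoint projection families on both sides, then gives $\mu(X)\prec\prec\mu(A)$.

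\textbf{Conclusion and main obstacle.} Summing the $N$ pieces and using Ky Fan's $\mu(X+Y)\prec\prec\mu(X)+\mu(Y)$ gives $T_m(A)\prec\prec N\mu(A)=\mu(NA)$. Closedness under submajorisation then gives $T_m(A)\in\Ec$ for all $A\in\Ec$, and the Remark preceding the Definition upgrades this to boundedness. The main obstacle is that submajorisation-closedness yields only membership, not a norm estimate; this is exactly why I route the conclusion through the closed-graph Remark, which needs $T_m(\Ec)\subseteq\Ec$ for all of $\Ec$. The converse for $\Cc_1$ and $B(\ell_2)$ is the content of \cite{DavidsonDonsig} together with duality, and I would cite it rather than reprove it.
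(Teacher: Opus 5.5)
Your proof of the sufficiency direction is correct, and it takes a different route from the paper. The paper does not prove this theorem itself. It quotes Davidson--Donsig for $B(\ell_2)$, transfers the result to $\Cc_1$ by duality, and reaches a general $\Ec$ by interpolation. That last step uses the noncommutative Calder\'on--Mityagin theorem (fully symmetric ideals are the interpolation spaces of $(\Cc_1,B(\ell_2))$), its quasi-Banach extension via \cite{CSZ}, and the renorming lemma of Section~\ref{fully_symmetric_section} to pass from ``closed under submajorisation'' to ``fully symmetric''.

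You instead prove directly the pointwise estimate $\mu(T_m(A))\prec\prec N\|m\|_\infty\,\mu(A)$. You get it from the block structure $X=\bigoplus_j P_jXQ_j$, the two-sided pinching inequality and Ky Fan. This single estimate encodes both endpoint bounds at once, so you can apply closedness under submajorisation to each $A$ separately. The closed-graph Remark then turns membership into boundedness. Alternatively, the renorming lemma would give the explicit bound $\|T_m\|_{\Ec\to\Ec}\le CN\|m\|_\infty$.

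Your route is self-contained and avoids interpolation theory for quasi-Banach ideals entirely. It also makes clear that the sufficiency half of Davidson--Donsig is the elementary half. The pinching step is standard but worth one line of justification, for instance by averaging $U_\theta^*AW_\theta$ over phases, as in the paper's diagonal argument. The paper's route is modular: its interpolation step transfers \emph{any} pattern bounded on both $\Cc_1$ and $B(\ell_2)$ to every fully symmetric $\Ec$, without using the row/column-finite structure.

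For the converse you cite \cite{DavidsonDonsig} plus duality, exactly as the paper does. One small slip, harmless because you discard the idea: the map $e_k\mapsto e_{a_i(k)}$ need not be bounded (e.g.\ if $a_i$ is constant). So the factorisation $T_m(A)=V_iD_{m,A}$ is only formal.
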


We complement this with the following theorem:
\begin{theorem}\label{pattern_theorem}
Let $\Ec$ be a symmetric quasi-Banach ideal of $B(\ell_2(\Ntrl)).$ The following are equivalent:
\begin{enumerate}[{\rm (i)}]
    \item{}\label{only_trivial} Every Schur $\Ec$-bounded pattern is a subset of a set of the form $(F\times \Ntrl)\cup (\Ntrl\times F)$ for some finite subset $F\subseteq \Ntrl,$
    \item{}\label{no_diagonal} The diagonal $\Delta = \{(n,n)\;:\;n\geq 0\}$ is not a Schur $\Ec$-bounded pattern.
    \item{}\label{no_toeplitz} The only Toeplitz Schur $\Ec$-bounded pattern is the empty set
    \item{}\label{no_hankel} The only Hankel Schur $\Ec$-bounded patterns are finitely supported.
    \item{}\label{not_fully} $(\Ec,\|\cdot\|_{\Ec})$ is not closed under submajorisation.
    %closed under submajorisation.
\end{enumerate}
\end{theorem}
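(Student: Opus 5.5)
Our plan is to prove the cycle of implications through (v), using the Davidson--Donsig theorem for the direction where $\Ec$ is closed under submajorisation.

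\textbf{Easy implications.}
(i)$\Rightarrow$(ii), (iii) and (iv) are immediate. The diagonal is not contained in any $(F\times\Ntrl)\cup(\Ntrl\times F)$. A nonempty Toeplitz pattern $\{(j,k): k-j\in D\}$ contains an infinite shifted diagonal. An infinite Hankel pattern $\{(j,k): j+k\in D\}$ contains infinitely many antidiagonal points $(j,d-j)$ with both coordinates unbounded. In each case Lemma \ref{elementary_lemma}(\ref{permanence}) finishes the argument.

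Conversely, (ii), (iii) and (iv) each imply (v) by contraposition. If $\Ec$ is closed under submajorisation, then the Davidson--Donsig theorem applies. The diagonal, every shifted diagonal, and every antidiagonal set are row-finite, so they are all Schur $\Ec$-bounded patterns. For (iii) this means the shifted diagonal is a nonempty Toeplitz Schur $\Ec$-bounded pattern, contradicting (iii). For (iv), the full antidiagonal pattern $\{(j,k):j+k\in D\}$ with $D$ infinite is a union of antidiagonals. It meets each row in $|D|$ points, so it is row-finite for finite $D$ only. Instead we take $D$ lacunary-free and sparse: the pattern $\{(j,d-j)\}$ for a single $d$ is finite. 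To get an infinite Hankel pattern that is row-finite, take $D=\{d_1<d_2<\dots\}$. Row $j$ contains one point for each $d_i\geq j$, which is infinitely many, so that fails. We would therefore use a different route here: the matrix $\{1_{j+k\in D}\}$ restricted to the triangle $j\le k$ is column-finite when the $d_i$ are taken to be... This again fails, so for (iv) we argue differently. By Lemma \ref{elementary_lemma}(\ref{permutation}) with $a(n)=n$ and $b(n)=d-n$ suitably modified, a Hankel pattern contains a copy of a diagonal. Conversely, choosing $D=\{2n\}$ and restricting to points $(n,n)$ gives the diagonal inside. Hence (iv) is essentially equivalent to (ii), and we will reduce (iii) and (iv) to (ii) via Lemma \ref{elementary_lemma}(\ref{permutation}).

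\textbf{Main implication (v)$\Rightarrow$(i).}
Suppose $\Ec$ is not closed under submajorisation and $S$ is not contained in any $(F\times\Ntrl)\cup(\Ntrl\times F)$. Then we can extract an infinite sequence $(j_n,k_n)\in S$ with both $(j_n)$ and $(k_n)$ strictly increasing. This is a standard diagonal extraction: otherwise some row or column coordinate set is finite. Applying Lemma \ref{elementary_lemma}(\ref{permutation}) and (\ref{permanence}), $S$ being bounded would force $\Delta$ to be bounded. So it suffices to prove (v)$\Rightarrow$(ii).

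\textbf{Key step: (v)$\Rightarrow$(ii).}
Assume $\Delta$ is a Schur $\Ec$-bounded pattern. Taking $m=1_\Delta$, the diagonal truncation $\mathcal D(A)=\diag(A_{n,n})$ is bounded on $\Ec$. We aim to show this gives closure under submajorisation.

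Given $x\prec\prec y$, with $x,y$ decreasing, nonnegative and $y\in\Ec$, it suffices by standard theory to handle $x\prec y$ in finite blocks with uniform constants. By the Schur--Horn theorem, every $x\prec y$ in $\Ntrl^N$ is the diagonal of a matrix $U\diag(y)U^*$. Hence $\|\diag(x)\|_\Ec\le C\|\diag(y)\|_\Ec$ for finite majorisation.

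The main obstacle is passing from majorisation to submajorisation, and from finite to infinite sequences. For the first, we use that $x\prec\prec y$ implies $x\le z\prec y$ for some $z$ (up to finite padding and a tail adjustment). For the second, we use a Fatou-type property or a closed graph argument with dyadic blocks, ensuring $\diag(x)\in\Ec$.

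If the Fatou issue resists, we would instead build an explicit unitary $U$ on $\ell_2$ block by block, with infinite Schur--Horn for compact operators (Kaftal--Weiss). This realises $z$ exactly as the diagonal of $U\diag(y)U^*$, so $\diag(x)\le\mathcal D(U\diag(y)U^*)\in\Ec$.
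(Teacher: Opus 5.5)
\textbf{Gap: the implication (iv)$\Rightarrow$(v) is not proved.}

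To prove it by contraposition, you must exhibit, for every $\Ec$ closed under submajorisation, an \emph{infinite} Hankel set that is a Schur $\Ec$-bounded pattern. Your final argument shows only that an infinite Hankel pattern contains a copy of the diagonal. With the permanence and permutation parts of the elementary lemma, this gives ``infinite Hankel pattern bounded $\Rightarrow$ $\Delta$ bounded'', i.e.\ (ii)$\Rightarrow$(iv). You already have that from (ii)$\Rightarrow$(i)$\Rightarrow$(iv). Containing $\Delta$ says nothing in the other direction, because boundedness passes to subsets, not supersets.

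Your specific candidate also fails. For $D=2\Ntrl$ the pattern $\{(j,k): j+k \text{ even}\}$ contains $2\Ntrl\times 2\Ntrl$, a copy of $\Ntrl^2$. So it is not Schur bounded for $\Cc_1$ or $B(\ell_2)$, and cannot serve as the required example. Hence ``(iv) is essentially equivalent to (ii)'' is not established, and (iv) stays disconnected from the other conditions.

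\textbf{How to repair it.} The idea you abandoned works with a lacunary $D$, e.g.\ $D=\{2^i: i\geq 0\}$, split along the main diagonal.
\begin{enumerate}[{\rm (a)}]
\item Points $(j,k)$ of the pattern with $j\geq k$ and fixed $j$ satisfy $j+k\in D\cap[j,2j]$, so there are at most two.
\item Points with $j<k$ and fixed $k$ satisfy $j+k\in D\cap[k,2k)$, so there is at most one.
\end{enumerate}
Thus the infinite Hankel pattern is a union of a set with at most two points in each $\{j\}\times\Ntrl$ and a set with at most one point in each $\Ntrl\times\{k\}$. The Davidson--Donsig theorem, in the form the paper states for quasi-Banach ideals closed under submajorisation, then makes it Schur $\Ec$-bounded, contradicting (iv).

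The paper reaches the same conclusion differently. It quotes Nikolskaya--Farforovskaya's infinite Hankel Schur $B(H)$-bounded patterns and transfers them to $\Cc_1$ by duality and to fully symmetric $\Ec$ by interpolation. Your lacunary splitting would be more self-contained.

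\textbf{The key step (v)$\Rightarrow$(ii).} The uniform finite-dimensional Schur--Horn estimate alone does not put $\diag(x)$ in $\Ec$, since a symmetric quasi-Banach ideal need not have the Fatou property. So rely on your fallback: construct $V\in\Ec$ with $\mu(V)\leq\mu(y)$ whose diagonal equals (or dominates) $x$. This is exactly the paper's weak Kaftal--Weiss lemma, obtained from Schur--Horn in $M_n$, a weak-operator cluster point, and lower semicontinuity of $\mu$.
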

A Toeplitz Schur $\Ec$-bounded pattern is a pattern of the form $\{(j,k)\;:\;j-k\in S_0\}$ where $S_0\subseteq\Itgr,$ while a Hankel Schur $\Ec$-bounded pattern is a pattern of the form $\{(j,k)\;:\;j+k\in S_1\}$ for some $S_1\subseteq \Ntrl.$

By Lemma \ref{elementary_lemma}.\eqref{finite_rank}, the subsets of sets of the form $F\times \Ntrl\cup \Ntrl\times F$ for $|F|<\infty$ are Schur $\Ec$-bounded for every quasi-Banach ideal $\Ec.$ Theorem \ref{pattern_theorem} characterises those ideals which have no ``non-trivial" Schur $\Ec$-bounded pattern.

Theorem \ref{pattern_theorem} implies that for $0<p<1,$ the only Schur $\Cc_p$-bounded patterns are the trivial patterns in Lemma \ref{elementary_lemma}.\eqref{finite_rank}.
\section{Ideals closed under submajorisation}\label{fully_symmetric_section}
Recall that we say that an ideal $\Ec$ of operators is closed under submajorisation if $A\in \Bc(H), B \in \Ec$ satisfy $A\prec\prec B$ then $A \in \Ec.$

By contrast, a quasi-Banach ideal $(\Ec,\|\cdot\|_{\Ec})$ is called \emph{fully symmetric} if $\Ec$ is closed under submajorisation and $A\prec\prec B$ implies $\|A\|_{\Ec}\leq \|B\|_{\Ec}$ \cite{DDP}.

There is not a significant difference between a quasi-Banach ideal being closed under submajorisation, and being fully symmetric, as the next lemma shows. The same result was proved for function spaced by Braverman and Mekler \cite[Proposition 2.1]{BravermanMekler1975}, with essentially the same proof.
\begin{lemma}
    Let $(\Ec,\|\cdot\|_{\Ec})$ be a quasi-Banach ideal of $\Bc(H).$ If $\Ec$ is closed under submajorisation, then $\Ec$ admits a quasinorm $\|\cdot\|_{\Ec}'$ which is equivalent to $\|\cdot\|_{\Ec}$ and such that $(\Ec,\|\cdot\|_{\Ec}')$ is fully symmetric.
\end{lemma}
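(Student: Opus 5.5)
The plan is to follow the Braverman--Mekler construction and define
\[
    \|A\|_{\Ec}' := \sup\{\|B\|_{\Ec}\;:\; B\in \Ec,\ B\prec\prec A\}.
\]
Since $A\prec\prec A$, we get $\|A\|_{\Ec}'\geq \|A\|_{\Ec}$. Monotonicity under submajorisation is immediate from transitivity of $\prec\prec$: if $A\prec\prec A_1$, then every $B\prec\prec A$ also satisfies $B\prec\prec A_1$, so $\|A\|_{\Ec}'\leq \|A_1\|_{\Ec}'$. Unitary and ideal invariance, as well as $\|XAY\|'_{\Ec}\leq \|X\|_\infty\|A\|'_{\Ec}\|Y\|_\infty$, follow because $\mu(XAY)\leq \|X\|_\infty\|Y\|_\infty\mu(A)$, and submajorisation is preserved under scaling.

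For the quasi-triangle inequality, I would use that $B\prec\prec A_1+A_2$ gives $\mu(B)\prec\prec \mu(A_1+A_2)\prec\prec \mu(A_1)+\mu(A_2)$ (Ky Fan). Passing to diagonal operators, one can split $\mu(B)\prec\prec x+y$ with $x=\mu(A_1)$, $y=\mu(A_2)$ as $\mu(B)\leq u+v$ with $u\prec\prec x$, $v\prec\prec y$. One way to obtain this is to take $\mu(B)=w$, note $w\prec\prec x+y$, and set $u=w\cdot x/(x+y)$, $v = w\cdot y/(x+y)$ pointwise. This needs care, since pointwise weights do not obviously preserve submajorisation. An alternative that avoids the splitting lemma is to use the quasi-norm comparison $\|\mu(A_1+A_2)\|$ in terms of $\sigma_2$ dilations: $\mu(A_1+A_2)\leq \sigma_2(\mu(A_1)+\mu(A_2))$ pointwise, with $\sigma_2$ bounded on any quasi-Banach symmetric ideal. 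I expect the clean path to be a Calder\'on-type decomposition for submajorisation, namely: if $w\prec\prec x+y$ with $w,x,y$ decreasing, then $w= u+v$ with $u\prec\prec x$, $v\prec\prec y$. This is a known fact, and I would cite or prove it via the Hardy--Littlewood--P\'olya/Calder\'on description of $\prec\prec$ through substochastic operators. Then $\|B\|_\Ec\leq C(\|u\|_\Ec+\|v\|_\Ec)\leq C(\|A_1\|'_\Ec+\|A_2\|'_\Ec)$.

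The main obstacle is finiteness and equivalence, i.e.\ $\|A\|_\Ec'\leq K\|A\|_\Ec$. Here I would use closedness under submajorisation together with a closed graph / Baire argument. Suppose no such $K$ exists. Then there are $A_n$ with $\|A_n\|_\Ec\leq 4^{-n}$ (adjusted to the quasi-triangle constant via Aoki--Rolewicz, using a $p$-norm equivalent to $\|\cdot\|_\Ec$) and $B_n\prec\prec A_n$ with $\|B_n\|_\Ec\geq n$. Form $A=\bigoplus_n A_n$, realised on disjoint blocks of $\ell_2$ after a unitary identification. This series converges in $\Ec$ by completeness and the $p$-norm bound. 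Then $\bigoplus_n B_n$ should satisfy $\bigoplus B_n\prec\prec \bigoplus A_n$. Direct sums preserve submajorisation since $\sum_{k\le n}\mu(k,\oplus B_j)$ is a supremum over choices of finitely many singular values from the blocks. Closedness under submajorisation then gives $\bigoplus B_n\in\Ec$, yet $\|\bigoplus B_n\|_\Ec\geq \|B_n\|_\Ec\geq n$ for all $n$, because compressing to a block does not increase the quasi-norm. This is a contradiction. Finally, $\|\cdot\|'_\Ec$ is equivalent to $\|\cdot\|_\Ec$, so $(\Ec,\|\cdot\|'_\Ec)$ is complete and hence fully symmetric.
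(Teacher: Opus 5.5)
Your proposal is correct and follows essentially the paper's route. You use the same quasi-norm $\sup\{\|B\|_{\Ec}: B\prec\prec A\}$, the same contradiction-by-summation argument for equivalence, and a Calder\'on-type splitting for the quasi-triangle inequality. For equivalence, your block direct sum $\bigoplus_n B_n\prec\prec\bigoplus_n A_n$ replaces the paper's Ky Fan estimate for sums of positive operators plus monotone convergence; it is slightly cleaner, since it needs no positivity. For the quasi-triangle inequality, you apply the splitting to sequences after Ky Fan, whereas the paper applies the operator-level Calder\'on theorem of Dodds--Dodds--de Pagter directly.

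Commit to that Calder\'on splitting and drop the other two options. The pointwise weights $u=wx/(x+y)$ genuinely fail: take $x=(1,0,\ldots)$, $y=(1,1,0,\ldots)$, $w=(\tfrac32,\tfrac32,0,\ldots)$, which gives $\mu(v)=(\tfrac32,\tfrac34,0,\ldots)$, not submajorised by $y$. The $\sigma_2$ bound does not by itself split a submajorant.
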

\begin{proof}
    Define
    \[
        \|B\|_{\Ec}' = \sup\{\|A\|_{\Ec}\;:\; A\prec\prec B\}.
    \]
    Obviously $\|B\|_{\Ec}\leq \|B\|_{\Ec'},$ although it is not obvious that $\|B\|_{\Ec}'<\infty.$ We argue that there exists a constant $C<\infty$ such that $\|B\|_{\Ec}' \leq C\|B\|_{\Ec}.$ Indeed, otherwise we may select sequences $\{A_n\}_{n=0}^\infty$ and $\{B_n\}_{n=0}^\infty$ of positive operators such that $A_n\prec\prec B_n,$ $\|A_n\|_{\Ec} = n,$ $\|B_n\|_{\Ec} \leq 2^{-n}.$

    Ky-Fan's inequalities (c.f. \cite[Theorem 3.9.9]{DPS-book}) imply that
    \[
        \sum_{n=0}^N A_n\prec\prec \sum_{n=0}^N \mu(B_n).
    \]
    Note that since $\|B_n\|_{\Ec} \leq 2^{-n},$ it follows that the sum $\sum_{n=0}^\infty \mathrm{diag}(\mu(B_n))$ converges in the norm of $\Ec.$ This is true in the quasi-Banach setting, by the Aoki-Rolewicz theorem \cite[Theorem 1.3]{KPR}. Since $A_n$ are positive, the continuity of the singular value function under monotone convergence implies that
    \[
        \sum_{n=0}^{\infty} A_n\prec\prec \sum_{n=0}^{\infty} \mathrm{diag}(\mu(B_n)).
    \]
    Since $\Ec$ is closed under submajorisation, it follows that $\sum_{n=0}^{\infty} A_n\in \Ec.$ However, since $\Ec$ is symmetric we have
    \[
        N = \|A_N\|_{\Ec} \leq \|\sum_{n=0}^N A_n\|_{\Ec}
    \]
    and so the sum $\sum_{n=0}^\infty A_n$ cannot converge in $\Ec.$
    Thus there exists $C<\infty$ such that
    \[
        \|B\|_{\Ec} \leq \|B\|_{\Ec}' \leq C\|B\|_{\Ec}.
    \]
    Obviously we have $\|\lambda B\|_{\Ec}' = |\lambda| \|B\|_{\Ec}',$ and if $A\prec\prec B$ then $\|A\|_{\Ec}'\prec\prec \|B\|_{\Ec}'.$ The only remaining property to check is the quasi-triangle inequality.

    Let $B_1,B_2\in \Ec.$ For $\varepsilon>0,$ there exist $A\in \Ec$ such that $A\prec\prec B_1+B_2$ and $\|B_1+B_2\|'\leq \|A\|_{\Ec}+\varepsilon.$ Since $A\prec\prec B_1+B_2,$ there exists a linear operator $T:\Bc(H)\to \Bc(H),$ contractive in both the trace class and the operator norm, such that
    $A = T(B_1+B_2),$ see \cite[Theorem 2.2(i)]{DDP}. Since $T$ is contractive in the operator and trace-class norms, we have $T(B_1)\prec\prec B_1$ and $T(B_2)\prec\prec B_2.$ The quasi-triangle inequality in $\Ec$ implies
    \[
        \|A\|_{\Ec}\leq C_{\Ec}(\|T(B_1)\|_{\Ec}+\|T(B_2)\|_{\Ec})\leq C_{\Ec}(\|B_1\|_{\Ec}'+\|B_2\|_{\Ec}')
    \]
    and therefore
    \[
        \|B_1+B_2\|_{\Ec}' \leq C_{\Ec}(\|B_1\|_{\Ec}'+\|B_2\|_{\Ec}')+\varepsilon.
    \]
    Since $\varepsilon$ is arbitrary, this completes the proof that $\|\cdot\|_{\Ec}'$ is a quasi-norm.
\end{proof}

\section{Boundedness of the diagonal pattern}

Let $\Delta = \{(n,n)\}_{n\geq 0} \subset \Ntrl^2.$ In this section we prove the equivalence of \eqref{no_diagonal} and \eqref{not_fully} in Theorem \ref{pattern_theorem}:
\begin{theorem}\label{boundedness_of_diagonal}
    Let $(\Ec,\|\cdot\|_{\Ec})$ be a symmetric quasi-normed ideal of $B(\ell_2(\Ntrl)).$ The diagonal $\Delta$ is a Schur $\Ec$-bounded pattern if and only if $\Ec$ is fully symmetric.
    
%    closed with respect to Hardy-Littlewood majorisation.
\end{theorem}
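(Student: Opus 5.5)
We prove the two directions separately. Throughout we use the previous lemma, which lets us replace "closed under submajorisation" by "fully symmetric" up to an equivalent quasi-norm. Since the diagonal being Schur $\Ec$-bounded is invariant under passing to an equivalent quasi-norm, this costs nothing.

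\emph{Fully symmetric $\Rightarrow$ $\Delta$ bounded.} Let $m$ be supported on $\Delta$ with $\|m\|_\infty\le 1$. For a finitely supported matrix $A$ we have $T_m(A)=\diag(m(n,n)A_{n,n})$. We show $T_m(A)\prec\prec A$. The singular values of $\diag(m(n,n)A_{n,n})$ are the decreasing rearrangement of $|m(n,n)A_{n,n}|\le |A_{n,n}|$, so it suffices to show $\diag(A_{n,n})\prec\prec A$. This is the classical fact that the conditional expectation onto the diagonal is contractive both in $B(\ell_2)$ and in $\Cc_1$. One can see it by writing the expectation as an average $\int U_\theta^* A U_\theta\,d\theta$ over diagonal unitaries $U_\theta=\diag(e^{in\theta_n})$, or directly via Ky Fan: $\sum_{k\le N}\mu(k,\cdot)$ is a norm, each term of the average is unitarily equivalent to $A$, and the Ky Fan norm is convex. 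Full symmetry then gives $\|T_m(A)\|_\Ec\le\|A\|_\Ec$.

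\emph{$\Delta$ bounded $\Rightarrow$ closed under submajorisation.} Suppose every bounded diagonal symbol is a Schur multiplier of $\Ec$. By the closed graph remark (or a uniform boundedness argument over symbols bounded by $1$), the diagonal projection $P:A\mapsto\diag(A_{n,n})$ is bounded on $\Ec$. Let $0\le x\prec\prec y$ be decreasing sequences with $\diag(y)\in\Ec$; we need $\diag(x)\in\Ec$. The key classical input is that submajorisation reduces to majorisation plus domination: there is $z$ with $x\le z\prec y$, decreasing. For finite truncations, majorisation $z\prec y$ is realised by a doubly stochastic (indeed unistochastic/orthostochastic) matrix, namely the Schur–Horn theorem. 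So there is a unitary $U$ with $\diag((U\diag(y)U^*)_{nn})=\diag(z)$. Hence
\[\|\diag(z^{(N)})\|_\Ec=\|P(U\diag(y^{(N)})U^*)\|_\Ec\le C\|\diag(y^{(N)})\|_\Ec\le C\|\diag(y)\|_\Ec,\]
and then $\|\diag(x^{(N)})\|_\Ec\le C\|\diag(y)\|_\Ec$, uniformly in $N$.

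The main obstacle is passing from these uniform bounds on truncations to membership of $\diag(x)$ in $\Ec$, since a general quasi-Banach symmetric ideal need not have the Fatou property. Rather than taking limits, I would use the device from the previous lemma's proof. Suppose $\Ec$ is not closed under submajorisation. A gliding hump then produces finite-rank positive $A_n\prec\prec B_n$ with disjoint supports, $\|B_n\|_\Ec\le 2^{-n}$ and $\|A_n\|_\Ec\ge n$ (using Aoki–Rolewicz for summability). Apply the finite-dimensional Schur–Horn step blockwise, with a block-diagonal unitary $U=\oplus U_n$, to the convergent sum $B=\oplus B_n\in\Ec$. This gives $P(UBU^*)\ge\oplus A_n$ in the diagonal order, with norm at least $n$ for every $n$, contradicting boundedness of $P$. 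The delicate points are arranging $z$ with the same finite support as $y$ (pad with zeros, working in dimension $2\cdot$rank if needed), and handling $\sum x_k<\sum y_k$ by raising the last entries, which keeps everything finite-dimensional.
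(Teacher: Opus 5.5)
Your forward direction is the paper's argument: the diagonal compression is contractive on $\Cc_1$ and on $B(\ell_2(\Ntrl))$, so $\diag(A)\prec\prec A$. The converse has a genuine gap at the gliding-hump step.

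Your finite Schur--Horn computation already shows that a bound $\|P(A)\|_\Ec\le C\|A\|_\Ec$ on finitely supported $A$ gives $\|\diag(a)\|_\Ec\le C\|\diag(b)\|_\Ec$ for all finitely supported $a\prec\prec b$. So the finite-rank pairs $A_n\prec\prec B_n$ with $\|A_n\|_\Ec\ge n$ and $\|B_n\|_\Ec\le 2^{-n}$ are exactly what you have already ruled out; the infinite sum $\oplus B_n$ adds nothing. Failure of closure under submajorisation does not produce such pairs, because the finite-rank part of an ideal does not detect this property.

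For example, let $\Ec$ be the closure of $\mathcal{L}_{1,\infty}$ in $\mathcal{M}_{1,\infty}$, with the norm $\sup_{n}\frac{1}{\log(n+2)}\sum_{k=0}^{n}\mu(k,\cdot)$. This norm is monotone under $\prec\prec$. Hence $P$ is contractive on finitely supported matrices, and $\|A\|_\Ec\le\|B\|_\Ec$ for every finite-rank pair $A\prec\prec B$. Yet every $X\in\mathcal{M}_{1,\infty}$ satisfies $X\prec\prec \|X\|\,\diag(\{(k+1)^{-1}\}_{k\geq 0})$, and the right-hand side lies in $\Ec$. Moreover $\Ec\neq\mathcal{M}_{1,\infty}$. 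To see this, take the decreasing sequence equal to $\log(2L_j)/L_j$ on $[2L_{j-1},2L_j)$, with $L_j$ growing fast enough. It belongs to $\mathcal{M}_{1,\infty}$, and by Mirsky's inequality $\sum_{k\le n}|\mu(k,X)-\mu(k,W)|\le\sum_{k\le n}\mu(k,X-W)$ it stays at distance at least $1$ from $\mathcal{L}_{1,\infty}$. So this $\Ec$ is not closed under submajorisation. Consequently, if ``Schur $\Ec$-bounded'' means only a bound on finitely supported matrices, the implication you are proving is false, and no truncation or gliding-hump argument can establish it.

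What the paper's proof uses is that $P$ maps $\Ec$ into $\Ec$; this is the reading behind the closed-graph remark in the introduction. You also use this implicitly when you apply $P$ to the infinite-rank operator $UBU^*$. With that reading you need neither truncations, nor a Fatou property, nor a gliding hump. The missing ingredient is a single infinite-dimensional operator.

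Given $\mu(A)\prec\prec\mu(B)$ with $B\in\Ec$, the paper's weak Kaftal--Weiss lemma produces a positive $V$ with $\diag(V)=\mu(A)$ and $\mu(V)\le\mu(B)$. The construction runs as follows.
\begin{enumerate}
\item Choose $0\le x_n\le\mu(B)\chi_{[0,n)}$ majorising $\mu(A)\chi_{[0,n)}$.
\item Apply Schur--Horn in $M_n(\mathbb{C})$ to obtain $V_n$ with diagonal $\mu(A)\chi_{[0,n)}$ and spectrum $x_n$.
\item Take a weak-operator cluster point $V$ of the $V_n$. The diagonal passes to the limit, and $\mu(k,\cdot)$ is lower semicontinuous under weak convergence of positive operators, so $\mu(V)\le\mu(B)$.
\end{enumerate}
Then $V\in\Ec$ by symmetry alone, and $P(V)=\diag(\mu(A))\in\Ec$ forces $A\in\Ec$.
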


If $V$ is a matrix, let $\mathrm{diag}(V)$ be the diagonal sequence $\{V_{j,j}\}_{j\geq 0}.$ We have
\begin{equation}\label{diagonal_restriction_is_bounded}
    \diag(V) \prec\prec \mu(V).
\end{equation}
A quick way to prove \eqref{diagonal_restriction_is_bounded} is via the identity
\[
    \mathrm{diag}(A) = \int_0^{1} U(t)AU(t)^*\,dt
\]
where $U(t) = \{\exp(2\pi i tj)\delta_{j,k}\}_{j,k\geq 0},$ and where the integral converges entrywise. This implies that for a finitely supported matrix $A,$ we have
\[
    \|\mathrm{diag}(A)\|_{\Cc_1} \leq \int_0^1 \|A\|_{\Cc_1}\,dt = \|A\|_{\Cc_1}
\]
and
\[
    \|\mathrm{diag}(A)\|_{B(\ell_2(\Ntrl))} \leq \int_{0}^1 \|A\|_{B(\ell_2(\Ntrl))}\,dt = \|A\|_{B(\ell_2(\Ntrl))}
\]
which together imply \eqref{diagonal_restriction_is_bounded}.

The following lemma is a weaker version of the Kaftal-Weiss theorem \cite{KaftalWeiss}.
\begin{lemma}\label{elementary kaftal-weiss}
Given non-negative sequences $x,y\in c_0(\Ntrl)$ with $y\prec\prec x,$ there exists a positive $V=\{a_{j,k}\}_{j,k\geq0}\in B(\ell_2(\Ntrl))$ such that $\diag(V)=y$ and $\mu(V)\leq\mu(x).$
\end{lemma}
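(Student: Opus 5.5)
The plan is to reduce to a finite-dimensional statement, solve it with a Schur–Horn type construction, and then pass to the limit. We may assume $x$ is non-increasing, replacing $x$ by $\mu(x)$ if necessary. The conditions $y\prec\prec x$ and $\mu(V)\le\mu(x)$ are insensitive to this replacement, and $\mu(y)\prec\prec\mu(x)$ holds by definition.

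The key observation is that submajorisation can be upgraded to majorisation by decreasing $x$. Fix $N$ and let $y^{(N)}=(y_0,\dots,y_{N-1})$. Its decreasing rearrangement $y^{(N)*}$ is submajorised by $(x_0,\dots,x_{N-1})$. We then construct a non-increasing vector $z^{(N)}\in\mathbb{R}^N$ with $0\le z^{(N)}\le (x_0,\dots,x_{N-1})$ pointwise and $y^{(N)}\prec z^{(N)}$, meaning submajorisation together with equal total sums. This is done by lowering the tail of $x$: choose the largest $k$ for which the partial sums of $x$ up to $k-1$ still fit, keep $x_0,\dots,x_{k-1}$, set $z_k$ equal to the deficit, and set all later entries to $0$. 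Equivalently, use the standard lemma that $u\prec\prec v$ with $u,v\ge0$ implies $u\prec w$ for some $0\le w\le v$. By the Schur–Horn theorem there is then a positive $N\times N$ matrix $V_N$ with diagonal $y^{(N)}$ and eigenvalues $z^{(N)}$. Hence $\mu(V_N)=z^{(N)}\le\mu(x)$ pointwise, extended by zeros.

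To pass to infinity we use compactness. Each $V_N$ is positive with $\|V_N\|_\infty\le x_0$, so after padding with zeros a subsequence converges in the weak operator topology to some positive $V$. The diagonal entries converge, which gives $\mathrm{diag}(V)=y$, because the $j$-th diagonal entry of $V_N$ equals $y_j$ whenever $N>j$. For the singular values, the map $A\mapsto\mu(k,A)$ is WOT lower semicontinuous on positive operators. This follows from the min–max characterisation $\mu(k,A)=\sup_{\dim L=k+1}\inf_{\xi\in L,\|\xi\|=1}\langle A\xi,\xi\rangle$, which for finite-dimensional $L$ is a supremum of WOT-continuous functions. Therefore $\mu(k,V)\le\liminf_N\mu(k,V_N)\le x_k$.

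The main obstacle is the finite truncation step. One must verify carefully that $y^{(N)}\prec\prec x^{(N)}$ can be upgraded to majorisation against a vector dominated by $x$, and handle the edge cases in that construction, such as zero entries or $k=N$. Everything after that is a standard weak-compactness argument.
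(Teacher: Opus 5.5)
Your proof is correct and follows the same route as the paper: truncate $y$, lower $x$ to a vector dominated by $x\chi_{[0,N)}$ that majorises the truncation, apply Schur--Horn, and take a WOT cluster point. The only difference is that you write out the construction of the majorising vector and prove the WOT lower semicontinuity of $\mu(k,\cdot)$ on positive operators via the min--max formula, whereas the paper cites this (Theorem 2.2.13 of LSZ).
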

\begin{proof}
    Let $n\geq 1,$ and $y_n:=y\chi_{[0,n)}$ and choose $0\leq x_n\leq x\chi_{[0,n)}$ such that $y_n\prec x_n.$ By the Schur-Horn theorem (c.f. \cite[Thoerem 10.18]{Zhang-matrix-theory-2011}) there exists a positive matrix $V_n\in M_n(\mathbb{C})$ with $\mathrm{diag}(V_n) = y_n$ and $\mu(V_n)=\mu(x_n).$ We identify $M_n(\mathbb{C})$ with a corner in $B(\ell_2(\Ntrl))$ and, therefore, view $V_n$ as an element of $B(\ell_2(\Ntrl)).$ The sequence $\{V_n\}_{n\geq0}$ is bounded and is therefore pre-compact in the weak operator topology. Choose a cluster point $V$ of the sequence $\{V_n\}_{n\geq0}$ and a convergent subnet of the sequence $\{V_n\}_{n\geq0}$ converging to $V.$ The diagonal of $V$ is $y.$ By \cite[Theorem 2.2.13]{LSZ}, $\mu(V)\leq\mu(x).$
\end{proof}

\begin{proof}[Proof of Theorem \ref{boundedness_of_diagonal}]
    Let $m$ be supported on $\Delta.$ By \eqref{diagonal_restriction_is_bounded}, we have
    \[
        \mu(T_mA) \leq \|m\|_{\infty}\mu(\mathrm{diag}(A)) \prec\prec \|m\|_{\infty}\mu(A).
    \]
    It follows that if $\Ec$ is fully symmetric,
% closed under submajorisation, 
then $T_m$ is bounded on $\Ec.$ It follows that
$\Delta$ is a Schur bounded pattern.

    Conversely, suppose that $\Ec$ is not fully symmetric.
 %closed under submajorisation. 
 This means that there exists $B\in \Ec$  and $A\notin \Ec$ such that $\mu(A) \prec\prec \mu(B).$ By Lemma \ref{elementary kaftal-weiss}, there exists $V\in B(\ell_2(\Ntrl))$ such that $\mathrm{diag}(V) = \mu(A),$ and $\mu(V)\leq \mu(B).$ Since $\Ec$ is symmetric, we have $V \in \Ec,$ but $\mathrm{diag}(V)\notin \Ec.$
\end{proof}

\section{Existence of nontrivial Schur bounded patterns}
We are now prepared to prove our main result.
\begin{proof}[Proof of Theorem \ref{pattern_theorem}]
    If $\Ec$ is closed under submajorisation, then by Theorem \ref{boundedness_of_diagonal}, the diagonal is a Schur $\Ec$-bounded pattern, but the diagonal is not a subset of a set of the form $F\times \Ntrl\cup \Ntrl\times F$ for finite $F.$ This shows that \eqref{only_trivial} implies \eqref{no_diagonal}.

    Conversely, suppose that there exists a Schur $\Ec$-bounded pattern $S$ such that
    \[
        S\cap (\{n,n+1,n+2,\ldots\}\times \{n,n+1,n+2,\ldots\}) \neq \emptyset
    \]
    for infinitely many $n.$ If follows that there exist increasing sequences $\{n_k\}_{k\geq 0}$ and $\{m_k\}_{k\geq 0}$ such that $\{(n_k,m_k)\;:\;k\geq 0\}\subset S.$ By Lemma \ref{elementary_lemma}.\eqref{permanence} and \eqref{permutation}, it follows that $\Delta$ is a Schur $\Ec$-bounded pattern.

    The fact that \eqref{no_diagonal} is equivalent to \eqref{not_fully} is precisely Theorem \ref{boundedness_of_diagonal}.

    Obviously, \eqref{only_trivial} implies \eqref{no_hankel} and \eqref{no_toeplitz}. Since any non-empty Toeplitz pattern contains an infinite diagonal, we also have the equivalence \eqref{no_toeplitz}$\leftrightarrow$\eqref{no_diagonal}.

    Nikolskaya-Farforovskaya \cite[Lemma 3.6]{NF} proved that there exist infinite Hankel Schur $\Bc(H)$-bounded patterns, which by duality must also be Schur $\Cc_1$-bounded patterns. Hence If $\Ec$ is fully symmetric, then there exists an infinite Hankel Schur $\Ec$-bounded pattern, and this shows that \eqref{no_hankel} implies \eqref{not_fully}, which completes the proof.
\end{proof}

\end{document}